\newtheorem{thm}{Theorem}[section]
\theoremstyle{definition}
\newtheorem{ex}[thm]{Example}
\theoremstyle{remark}
\newtheorem{rem}[thm]{Remark}
\numberwithin{equation}{section}
\newcommand{\rmnum}[1]{\romannumeral #1}
\newcommand{\Rmnum}[1]{\expandafter\@slowromancap\romannumeral #1@}
\newcommand{\bx}{\mathbf{x}}
\newcommand{\by}{\mathbf{y}}
\newcommand{\R}{\mathbb{R}}
\newcommand{\N}{\mathbb{N}}
\newcommand{\SL}{\operatorname{SL}}
\newcommand{\GL}{\operatorname{GL}}
\newcommand{\Z}{\mathbb{Z}}
\newcommand{\til}{\widetilde}
\newcommand{\vre}{\varepsilon}
\newcommand{\dd}{\; \mathrm{d}}
\newcommand{\Ad}{\operatorname{Ad}}
\begin{document}
	
	\title{Quantitative Multiple pointwise convergence and effective multiple correlations}

	
	\author{Rene R\"uhr}
		\address{Faculty of Mathematics, Technion, Haifa, Israel}
	\email{rener@campus.technion.ac.il}
	\thanks{R.R.\ acknowledges that this project has received
funding from the European Research Council (ERC) under the European Union's Horizon 2020 research
and innovation program (grant agreement No. 754475).}
	\subjclass[2010]{Primary   37A30; Secondary 37A25, 37C85.}
		
	\author{Ronggang Shi}
	\address{Shanghai Center for Mathematical Sciences, Jiangwan Campus, Fudan University, No.2005 Songhu Road, Shanghai, 200433, China}
	\email{ronggang@fudan.edu.cn}
	\thanks{R.S.\ is supported by NSFC 11871158.}

	\date{}
	
	
	\keywords{pointwise convergence, multiple mixing, ergodic theorem}

	\begin{abstract}
		We show that effective $2\ell$-multiple correlations imply quantitative  $\ell$-multiple pointwise  ergodic theorems. The result has a wide class of applications which include  subgroup actions on homogeneous spaces, ergodic nilmanifold  automorphisms, subshifts of finite type
		and Young towers. 
	\end{abstract}

	\maketitle
	\markright{}

	\section{Introduction}
	Let $h: X\to X$ be a measure preserving  map
	on a probability space  $(X, \mathcal X, \mu) $.  
	The $\ell$-multiple pointwise ergodic theorem 
	 asks about the almost sure convergence of the average
	\begin{align}\label{eq;fur}
	\frac{1}{N} \sum _{n=1}^N f_1(h^n x)f_2(h^{2n}x)\cdots f_\ell (h^{\ell n}x)\qquad (N\to \infty),
	\end{align}
	where $f_i:X\to \R\ (1\le i\le \ell)$ are bounded measurable functions. When $\ell=1$, the almost sure convergence of (\ref{eq;fur}) is the Birkhoff ergodic theorem.
	The study of the question for $\ell=2$ is initiated by Hillel Furstenberg \cite[p.~96]{f}. Furstenberg's question was answered affirmatively by Bourgain \cite{b}. 
	
	Derrien and Lesigne \cite{dl} showed that   (\ref{eq;fur})
	converges almost surely for all $f_i \in L^\infty(X, \mathcal X, \mu)$ if and only if the same holds 
	for all $f_i\in L^\infty(X, \mathcal P, \mu)$ where $\mathcal P$ is the Pinsker $\sigma$-algebra of $h$.
	In this paper all the function spaces such as $L^p, C_c^\infty$ and $ C^\theta$ consist of real valued functions.  Therefore, the almost sure convergence of  (\ref{eq;fur}) holds if $h$ is a Kolmogorov automorphism. Since every Bernoulli automorphism is a Kolmogorov automorphism, the same result holds if  $h$ is a two-sided Bernoulli shift. 
	The same method was used by 
	Assani \cite{assani} to show the  almost sure convergence of (\ref{eq;fur}) when $h$ is weakly mixing and the restriction of $h$ to $(X, \mathcal P, \mu)$ has singular spectrum.  
	  We refer the readers to Walters \cite
	  {walters} for the definitions of these
	 concepts.

Recently, progress on higher order pointwise convergence was obtained by  Huang-Shao-Ye \cite{hsy} for  ergodic distal systems  and by  Gutman-Huang-Shao-Ye \cite{ghsy} for 
	weakly mixing pairwise independent systems. 
	See also Donoso-Sun \cite{ds1}\cite{ds2} for an extension of \cite{hsy} to commuting distal transformations.

	We remark here that 
	the multiple mean  ergodic theorem for  commuting maps  are well-understood, see Tao \cite{tao} and references there.  An interesting extension of the mean ergodic theorem to nilpotent group actions with polynomial ergodic averages was obtained by Walsh \cite{walsh}. The pointwise  convergence of polynomial ergodic averages is known for $\ell=1$ by Bourgain \cite{bourgain1}\cite{bourgain} and 
	for Kolmogorov automorphisms by \cite{dl}.

	In a recent paper \cite{shi}, the authors noticed that effective multiple correlations can be used to prove quantitative multiple  pointwise  convergence. 
	  The aim of this paper is to develop this method further in an abstract setting (Theorem \ref{thm;final}) and obtain more results on quantitative multiple pointwise convergence. The examples contain commuting subgroup  actions on homogeneous spaces, nilmanifold
	  automorphisms,   subshifts of finite type 
	  and Young towers.

	  We begin with subgroup actions on homogeneous spaces.
	  A unitary representation  of a locally compact topological group  $H$ on a (real) Hilbert space $V$ is said
	  to have a spectral gap if there exists $\delta >0$
	  and a compactly supported probability measure $\nu$ on $H$ such that 
	  \[
	\left  \| \int_ H hv  \dd \nu(h)\right \|\le (1-\delta) \|v\| \quad \mbox{ for all }v\in V. 
	  \]
	  
	  Let $\Gamma$ be a lattice of a connected Lie group $G$. 
	  The homogeneous space $X=G/\Gamma$ carries a unique $G$ invariant probability measure $\mu$. 
   Let $H$ be a connected semi-simple subgroup of $G$ with finite center. The  natural left translation action of 
	  $H$ on $X$ induces a unitary representation of 
	  $H$ on $$L^2_{ 0}(X, \mu)=\{ f\in L^2(X, \mu): \mu(f)=0 \}\quad \mbox{via}\quad  (hf)(x)=f(h^{-1}x).$$ 
We say the action of $H$ on $X$ has a strong spectral gap if the representation of each noncompact simple factor of $H$ on $L^2_0(X, \mu)$ has a spectral gap. 
A sequence  of   integers $\{r_n \}$ is said to be non-clustered if
the cardinality of  $\{n\in \N : r_n=m \}$ is uniformly bounded for all $m\in \Z$. 
In this paper we adopt the convention that the set of natural numbers 
 $\N$ does not contain $0$. 
For $h\in H$, we let 
$\langle h \rangle$ be the group generated by $h$.  We use $C_c^\infty (X)$ to denote the 
space of compactly supported and  smooth functions on $X$. We call a group element $h\in H$ quasi-unipotent if
all the eigenvalues of 
 its corresponding adjoint action $\Ad(h)\in \GL(\mathrm{Lie}(H))$ have  modulus one.
	 \begin{thm}
	 	\label{thm;unipotent}
	 Let $G, \Gamma , X,\mu, H$ be as above.
Suppose the action of $H$ on $X$ has a strong spectral gap. 
Let $h_1, \ldots, h_\ell$ be commuting elements of $H$  such that all  the groups  $\langle h_i\rangle $ and   $\langle h_i h_{j}^{-1}\rangle $
 $(i\neq j)$ are unbounded. Then there exists $\varepsilon>0$ such that  for any $\ell \in \N$, any $f_1, \ldots, f_\ell\in C_c^\infty(X)$ and any non-clustered   sequence of integers  $\{ r_n \}$,  one has 
 $\mu$-almost surely 
\begin{align}\label{eq;unipotent}
\frac{1}{N} \sum_{n=1}^N f_1(h_1^{r_n} x)\cdots f_\ell(h_\ell^{r_n}x)=\mu(f_1)\cdots  \mu(f_\ell)+    o_x(N^{-\varepsilon}).
\end{align}
Moreover, if all the $h_i$ and $h_ih_j^{-1}\ (i\neq j)$  are not quasi-unipotent, then (\ref{eq;unipotent}) holds almost surely with a better error term 
\begin{align}\label{eq;fuwocheng}
o_x(N^{-\frac{1}{2}}
\log ^{\frac{3}{2}+\varepsilon}N)\qquad \forall \varepsilon>0.
\end{align}
	 \end{thm}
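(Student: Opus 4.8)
The plan is to derive Theorem~\ref{thm;unipotent} from the abstract result Theorem~\ref{thm;final} by verifying, in the present homogeneous setting, its hypothesis of effective $2\ell$-multiple correlations. Fix a left-invariant Riemannian metric on $H$ and write $d(g)=d(e,g)$. The estimate to be established is that there exist $C,\kappa>0$ and $m\in\N$ such that for every $k\le 2\ell$, all $g_0,\dots,g_k\in H$ and all $\phi_0,\dots,\phi_k\in C_c^\infty(X)$
\begin{align}\label{eq;sketch-corr}
\left|\int_X\phi_0(g_0x)\cdots\phi_k(g_kx)\dd\mu(x)-\mu(\phi_0)\cdots\mu(\phi_k)\right|\le C\,\mathcal S_m(\phi_0)\cdots\mathcal S_m(\phi_k)\,e^{-\kappa\Delta},
\end{align}
where $\mathcal S_m$ is a Sobolev norm of order $m$ and $\Delta=\min_{0\le i<j\le k}d(g_i^{-1}g_j)$. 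Granting \eqref{eq;sketch-corr}, one applies it to the $2\ell$ elements $h_1^{r_n},\dots,h_\ell^{r_n},h_1^{r_{n'}},\dots,h_\ell^{r_{n'}}$ to control the inner products $\langle F_n,F_{n'}\rangle_{L^2}$ with $F_n=\prod_{i=1}^{\ell}f_i(h_i^{r_n}\,\cdot\,)$; this is exactly the input required by Theorem~\ref{thm;final}. Beforehand one reduces to $\mu(f_i)=0$ by expanding $\prod_i\big(\mu(f_i)+(f_i-\mu(f_i))\big)$, the fully constant term producing the main term $\prod_i\mu(f_i)$ and each remaining term carrying a mean-zero factor (so \eqref{eq;sketch-corr} pushes its correlations towards $0$).

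For \eqref{eq;sketch-corr} I would use the strong spectral gap as follows. Each noncompact simple factor of $H$ has a spectral gap on $L^2_0(X,\mu)$, so by the quantitative Howe--Moore phenomenon its smooth matrix coefficients decay exponentially, $|\langle gv,w\rangle|\ll\mathcal S_m(v)\,\mathcal S_m(w)\,e^{-\kappa d(g)}$. Writing a general element of $H$ along its simple factors, and using that the gap is present on every noncompact one, the standard reduction of a $(k+1)$-fold correlation to a bounded product of two-fold correlations --- reorder the $g_i$ so that the pair realising $\Delta$ is split off, expand $\int\prod\phi_i(g_ix)\dd\mu$ as an iterated inner product, and separate the mean of one factor from the rest --- yields \eqref{eq;sketch-corr}. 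This is the one place where the \emph{strong}, i.e.\ factorwise, gap is used rather than a single spectral gap; alternatively one could quote an effective multiple mixing theorem already available for semisimple groups with a spectral gap.

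The remaining, and I expect most delicate, ingredient is the geometric--combinatorial analysis of $\Delta=\Delta(n,n')$ for this configuration. Here $\Delta$ is the minimum of the \emph{diagonal} separations $d(h_i^{r_n-r_{n'}})$, the \emph{same-time} separations $d\big((h_ih_j^{-1})^{r_n}\big)$ and $d\big((h_ih_j^{-1})^{r_{n'}}\big)$ for $i\neq j$, and the \emph{cross} separations $d\big(h_i^{r_n}h_j^{-r_{n'}}\big)$ for $i\neq j$. The key lemma --- proved using that $H$ has finite center and that the subgroup generated by any two of the $h_i$ has rank at most two --- is: if none of the $h_i$ or $h_ih_j^{-1}$ is quasi-unipotent then $d(h^s)\gg|s|$ for each $h$ on this list, hence the diagonal separations are $\gg|r_n-r_{n'}|$ and the same-time ones $\gg\max(|r_n|,|r_{n'}|)$; in general, since $\langle h\rangle$ being unbounded together with finite center forces $\Ad(h^s)$ to have polynomial growth of positive degree whenever $h$ is quasi-unipotent, one still has $d(h^s)\gg\log(2+|s|)$. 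The cross separations obey the same bounds away from the pairs $(n,n')$ for which $r_n$ and $r_{n'}$ satisfy an approximate multiplicative relation forced by a relation among the $h_i$; on those exceptional pairs $\Delta$ may be small, but by the non-clustered hypothesis the number of $(n,n')\in\{1,\dots,N\}^2$ for which some separation is at most $T$ is $O_T(N)$, and summing the geometric tail $e^{-\kappa\Delta}$ over the ``defect'' of such a relation keeps the contribution of these pairs to $\frac1{N^2}\sum_{n,n'}\big|\langle F_n,F_{n'}\rangle-\mu(F_n)\mu(F_{n'})\big|$ of order $O(1/N)$.

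Combining these, and using that for fixed $n$ the numbers $r_n-r_{n'}$ ($1\le n'\le N$) take at most $N$ distinct values each with bounded multiplicity, the non-clustered condition gives $\sum_{n'=1}^N e^{-\kappa d(h_i^{r_n-r_{n'}})}=O(1)$ in the non-quasi-unipotent case and $O(N^{1-\kappa'})$ in general; hence the variance $\big\|\frac1N\sum_{n=1}^N(F_n-\mu(F_n))\big\|_2^2$ is $O(1/N)$, respectively $O(N^{-\kappa'})$ for some $\kappa'>0$ depending only on the spectral gap. Theorem~\ref{thm;final} converts this $\ell^2$-bound into the almost sure statement, with the error $o_x(N^{-\vre})$ for a suitable $\vre>0$ in general and the sharper $o_x\big(N^{-1/2}\log^{3/2+\vre}N\big)$ when the decay is exponential (the $\log^{3/2+\vre}$ being the usual cost of a G\'al--Koksma / maximal inequality passage from $\ell^2$ to pointwise along dyadic blocks). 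The principal obstacle is therefore the lemma of the previous paragraph, and especially the verification that the pairs $(n,n')$ on which it degenerates --- those carrying an approximate relation among the $h_i^{r_n}$ and $h_j^{r_{n'}}$ --- are sufficiently sparse not to spoil the variance estimate.
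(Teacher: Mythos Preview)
Your overall architecture matches the paper's: reduce to mean-zero $f_i$, invoke effective $2\ell$-fold mixing coming from the strong spectral gap, and feed the resulting correlation bound into Theorem~\ref{thm;final}. Two remarks on the first two steps: the paper does not rederive~\eqref{eq;sketch-corr} but simply cites the quantitative multiple-mixing theorem of Bj\"orklund--Einsiedler--Gorodnik together with the comparison~\eqref{eq;op} between $d$ and $\|\Ad(\cdot)\|$; and Theorem~\ref{thm;final} does not take a variance bound as input but rather the pointwise estimate~\eqref{eq;one} together with the cardinality conditions~\eqref{eq;three0}--\eqref{eq;two1}, so your task is to verify those conditions, not to bound $\|\tfrac1N\sum F_n\|_2^2$ directly.

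Your growth estimates for the \emph{diagonal} terms $d(h_i^{s})$ and the \emph{same-time} terms $d((h_ih_j^{-1})^{s})$ are correct and coincide with the paper's (exponential in the non-quasi-unipotent case, logarithmic otherwise). The genuine gap is exactly where you flag it: the \emph{cross} terms $d(h_i^{r_n}h_j^{-r_{n'}})$ with $i\neq j$. Your sketch --- ``approximate multiplicative relation'', ``exceptional pairs sparse by non-clustering'', ``sum the geometric tail'' --- does not constitute an argument, and the appeal to ``finite center and rank at most two'' is not how this is done (nor is it clear how it would help). What is actually needed, in the language of Theorem~\ref{thm;final}, is that for every fixed $m$ the set $\{n:\|\Ad(h_i^{m}h_j^{-n})\|\le N\}$ has cardinality $O(N)$ with the implied constant \emph{independent of $m$}. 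The paper establishes this by a case analysis on $\Ad(h_i),\Ad(h_j)$: if both are quasi-unipotent, pass via the Jordan decomposition to the unipotent parts, simultaneously upper-triangularize by Engel's theorem, and locate a matrix entry of $\Ad(h_i^{m}h_j^{-n})$ of the form $sn+t_m$ with $s\neq 0$ fixed, giving $\|\Ad(h_i^{m}h_j^{-n})\|\gg|sn+t_m|$; if at least one is not quasi-unipotent, find a simultaneous eigenvector whose eigenvalues $s,t$ satisfy $\max(|s|,|t|)>1$ and $\min(|s|,|t|)\ge1$ (when the obvious choice fails, a $\det=1$ argument produces one), yielding exponential growth in one of the two variables. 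This analysis --- rather than anything about sparse exceptional pairs --- is what makes conditions~\eqref{eq;two0}/\eqref{eq;two1} hold uniformly, and it is the heart of the proof. Note also that in the quasi-unipotent case the tail in $e^{-\kappa\Delta}$ is only polynomial (since $d$ grows like $\log$), not geometric, so your ``geometric tail'' remark applies only to the non-quasi-unipotent situation; this is precisely why the general conclusion is $o_x(N^{-\varepsilon})$ for some small $\varepsilon$ rather than the sharper~\eqref{eq;fuwocheng}.
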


 Here the notation $\varphi(N)=o(\psi(N))$ means $\lim_{ N\to \infty}\frac{\varphi(N)}{\psi(N)}=0$.
   In this paper the phrase quantitative is used to denote a weaker version of effectiveness in the sense that we do not have  any control of  the convergence speed in the little $o$ notation appearing in the ergodic theorems.
 Theorem \ref{thm;unipotent} is already interesting in the simplest case where $\ell=1$ as can be seen from 
 the following   example.

\begin{ex}\label{ex;polynomial}
	Theorem \ref{thm;unipotent}  holds for  $H=G=\SL_2(\R)$,  $\Gamma=\SL_2(\Z)$, $\ell =1$,  $h_1=u(s)=\left(
\begin{array}{cc}
1 & s\\
0 & 1
\end{array}
	\right)$ where $s\neq 0$ and $r_n=p(n)$ where $p(n)$ is a  
polynomial with integer coefficients and degree at least two.
 The qualitative result in this case is known earlier by \cite{bourgain1}. According to a conjecture of Shah \cite{shah}, it is expected that
the qualitative result holds for all the generic $x$, where $x$ is said to be generic if the trajectory $\{u(t)x: t\ge 0 \}$ is dense in $X$.   

Another interesting case is to take  
  $r_n$ to be  the $n$-th 
prime number. This type of questions was considered earlier by Sarnak and Ubis \cite{su}. 
It is also expected that the qualitative result holds for all the generic $x$. 
\end{ex}

Next we consider the multiple ergodic theorem for a fixed 
  probability measure preserving 
system $(X, \mathcal X, \mu, h)$. 
Usually when $X$ is a topological space, we take $\mathcal X$ to be the Borel $\sigma$-algebra. In this case or other cases where $\mathcal X$ is understood or 
does not play an important role, 
we may omit it  in our notation. 
   We say $(X, \mu,  h)$ is
 $k$-mixing with  rate $n^{-\delta}$ ($\delta>0$) for  a vector space $\mathcal F $ of measurable  functions  on $X$
 if  for any $f_0, f_1, \ldots, f_k\in \mathcal F$ and any pairwise distinct  $n_0, n_1, \ldots, n_k\in \N$, one has
 \begin{align}\label{eq;letter}
 \int_X f_0(h^{n_0}x)\cdots f_k(h^{n_k}x)\dd\mu =\mu(f_0)\cdots \mu(f_k)
 +O( \inf_{i\neq j}|n_i-n_j|^{-\delta}), 
 \end{align}
 where the implied constant is allowed to depend on $k$ and $f_1, \ldots, f_k$. Here $\varphi=O(\psi)$ 
 for two functions means $|\varphi|\le C |\psi|$ for some constant $C>0$. 
 Our definition of $k$-mixing follows the convention of Glasner \cite[Chapter 3 \S 9]{glasner}.
 The pairwise distinctness assumption for $ n_1, \ldots, n_k\in \N$ is not essential, since we could drop it and  and replace  $|n_i-n_j|^{-\delta}$ by $ (|n_i-n_j|+1)^{-\delta}$.
 

 \begin{thm}
 	\label{thm;mixing}
 	If a probability measure preserving system $(X, \mathcal X, \mu, h)$ is $(2\ell-1)$-mixing
 	 with  rate $n^{-\delta}$ ($\delta>0$) for  a  vector space of measurable functions $\mathcal F $ containing constant functions, then 
 	for any pairwise distinct  $m_1, \ldots, m_\ell\in \N $, any $f_1, \ldots, f_\ell\in \mathcal F$ and any non-clustered sequence 
 	of positive  integers $\{r_n \}$,  one has $\mu$-almost surely
 	\begin{align}\label{eq;repeat}
 	\frac{1}{N} \sum_{n=1}^N f_1(h^{m_1r_n} x)\cdots f_\ell(h^{m_\ell r_n }x)=\mu(f_1)\cdots \mu(f_\ell)+ o_x(\rho_{\varepsilon, \delta}(N)), 
 	\end{align}
 	where 
 $\rho_{\varepsilon, \delta}(N)= N^{-\frac{1 }{2}}\log^{\frac{3}{2}+\varepsilon} N$ if $\delta >1$ and 
 $\rho_{\varepsilon,\delta}(N)=N^{-\frac{\delta }{2}+\varepsilon} $ if $0<\delta \le 1$. 
 \end{thm}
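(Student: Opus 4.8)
The plan is to run the classical ``$L^{2}$-variance bound plus G\'{a}l--Koksma'' scheme. First I would reduce to mean-zero functions. Setting $\tilde f_i=f_i-\mu(f_i)$, which lies in $\mathcal F$ because $\mathcal F$ is a vector space containing the constants, and expanding
\begin{align*}
\prod_{i=1}^\ell f_i(h^{m_i r_n}x)=\sum_{S\subseteq\{1,\dots,\ell\}}\Big(\prod_{i\notin S}\mu(f_i)\Big)\prod_{i\in S}\tilde f_i(h^{m_i r_n}x),
\end{align*}
the term $S=\emptyset$ is the asserted main term $\mu(f_1)\cdots\mu(f_\ell)$, and the remaining terms are finite in number. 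Hence it suffices to prove: for every $1\le j\le\ell$, pairwise distinct $m_1,\dots,m_j\in\N$ and $g_1,\dots,g_j\in\mathcal F$ with $\mu(g_i)=0$, the sums $S_N:=\sum_{n=1}^N\prod_{i=1}^j g_i(h^{m_i r_n}x)$ satisfy $S_N/N=o_x(\rho_{\varepsilon,\delta}(N))$ $\mu$-almost surely. Here $(2j-1)$-mixing with rate $n^{-\delta}$ is at our disposal, being a consequence of $(2\ell-1)$-mixing (fill the remaining function slots with the constant $1$ placed at far-away times); and we may take $\mathcal F\subseteq L^{2\ell}(\mu)$, as is implicit in the meaningfulness of the correlation integrals in (\ref{eq;letter}). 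In particular, by H\"older, $F_n:=\prod_{i=1}^j g_i(h^{m_i r_n}\cdot)\in L^2(\mu)$ with $\|F_n\|_2$ bounded independently of $n$.

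The core is an $L^{2}$ estimate. For $0\le M<N$,
\begin{align*}
\int_X (S_N-S_M)^2\dd\mu=\sum_{M<n,n'\le N}\int_X F_n F_{n'}\dd\mu,
\end{align*}
and each integrand is a product of $2j\le 2\ell$ functions from $\mathcal F$ evaluated at the times $\{m_i r_n\}_i\cup\{m_i r_{n'}\}_i$. When these $2j$ times are pairwise distinct, $(2j-1)$-mixing together with $\mu(g_i)=0$ gives $\big|\int_X F_nF_{n'}\dd\mu\big|=O\big((\gamma(n,n')+1)^{-\delta}\big)$, where $\gamma(n,n')$ is the smallest gap occurring among those times; when two times coincide one uses instead the trivial bound $\|F_n\|_2\|F_{n'}\|_2=O(1)$, which has the same form because then $\gamma(n,n')=0$. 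Any gap between two times carrying the same $r$-index is at least $\min_{p\neq q}|m_p-m_q|\cdot\min(r_n,r_{n'})\ge\min(r_n,r_{n'})$, while any cross gap equals $|m_p r_n-m_q r_{n'}|$; hence $\gamma(n,n')\ge\min\big(r_n,r_{n'},g(n,n')\big)$ with $g(n,n'):=\min_{p,q}|m_p r_n-m_q r_{n'}|$, so that
\begin{align*}
\Big|\int_X F_nF_{n'}\dd\mu\Big|\ \ll\ (r_n+1)^{-\delta}+(r_{n'}+1)^{-\delta}+(g(n,n')+1)^{-\delta}.
\end{align*}
The non-clustered hypothesis controls the summation. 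For fixed $n$, arrange the values $r_{n'}$ (respectively $g(n,n')$) over $n'\in(M,N]$ in increasing order: each value is attained only a bounded number of times — directly for $r_{n'}$, and for $g(n,n')$ because the equation $g(n,n')=k$ forces $r_{n'}$ to be one of the $\le 2j^2$ rationals $(m_p r_n\pm k)/m_q$ — so the $s$-th smallest of these values grows at least linearly in $s$, whence both
\begin{align*}
\sum_{M<n'\le N}(r_{n'}+1)^{-\delta}\ \ll\ \sum_{s=1}^{N-M}s^{-\delta}\qquad\text{and}\qquad \sum_{M<n'\le N}(g(n,n')+1)^{-\delta}\ \ll\ \sum_{s=1}^{N-M}s^{-\delta},
\end{align*}
which is $\ll(N-M)^{1-\delta}$, $\ll\log(N-M)$, or $\ll 1$ according as $0<\delta<1$, $\delta=1$, or $\delta>1$. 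Summing over the remaining free index costs a further factor $N-M$; using the superadditivity $(N-M)^p\le N^p-M^p$ for $p\ge1$ (and its analogue for $x\log x$), this yields $\int_X(S_N-S_M)^2\dd\mu\ll\Phi(N)-\Phi(M)$ with $\Phi(N)=N$ if $\delta>1$, $\Phi(N)=N\log N$ if $\delta=1$, and $\Phi(N)=N^{2-\delta}$ if $0<\delta<1$.

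The G\'{a}l--Koksma lemma then gives $S_N=o\big(\Phi(N)^{1/2}(\log\Phi(N))^{3/2+\varepsilon}\big)$ $\mu$-almost surely for every $\varepsilon>0$. Dividing by $N$: when $\delta>1$ this is $o_x(N^{-1/2}\log^{3/2+\varepsilon}N)$, and when $0<\delta\le1$ it is $o_x(N^{-\delta/2}\log^{c}N)$ for a fixed $c$, hence $o_x(N^{-\delta/2+\varepsilon})$. In each case this is exactly $o_x(\rho_{\varepsilon,\delta}(N))$, and recombining the finitely many subsets $S$ finishes the proof.

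I expect the real work to be the $L^{2}$ estimate, and inside it the two counting steps that keep the number of ``resonant'' pairs $(n,n')$ — those for which some gap among $\{m_i r_n\}\cup\{m_i r_{n'}\}$ is too small for effective mixing to produce any decay — under control. Both rest on the non-clustered property of $\{r_n\}$: once applied to the values $r_n$ themselves, and once to the integer solutions of $m_p r_n-m_q r_{n'}=k$. The mean-zero reduction and the G\'{a}l--Koksma step are routine.
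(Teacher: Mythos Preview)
Your proposal is correct and follows essentially the same route as the paper: the paper derives Theorem~\ref{thm;mixing} from Theorem~\ref{thm;exponential}, which in turn verifies the counting conditions (\ref{eq;three0})--(\ref{eq;two0}) for the choices $b_{i,j}(m,n)=|t_ir_m-t_jr_n|+1$, $c_{i,j}(n)=|t_ir_n-t_jr_n|+1$ (resp.\ $|t_ir_n|+1$) and then appeals to Theorem~\ref{thm;final} and Theorem~\ref{thm;tool}; your direct argument is precisely this chain unpacked, with the subset expansion playing the role of the paper's ``simple induction on $\ell$'', your gap counting matching the verification of (\ref{eq;two}) and (\ref{eq;three}), and your G\'al--Koksma step being Theorem~\ref{thm;tool}. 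The only cosmetic difference is at $\delta=1$: you take $\Phi(N)=N\log N$ and invoke the general G\'al--Koksma lemma, whereas the paper bounds $(N-M)\log2(N-M)\ll(N-M)^{1+\varepsilon}$ and uses the power form $\Phi(N)=N^{1+\varepsilon}$ so as to stay within the scope of Theorem~\ref{thm;tool}; both yield $o_x(N^{-1/2+\varepsilon})$.
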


\begin{rem}\label{rem;exp}
	We say $(X, \mu, h) $ is $k$-mixing with exponential rate if 	there exists $\sigma   >0$ such that the error term of (\ref{eq;letter}) is given by  $ O(e^{-\sigma \inf_{i\neq j}|n_i-n_j|})$. Clearly, exponential rate implies 
	$n^{-2}$ rate and hence Theorem \ref{thm;mixing} holds in this case. 
\end{rem}

\begin{rem}\label{rem;invertible}
	If we  assume  $h: X\to 
	X$ is invertible and (\ref{eq;letter}) holds for any $n_i \in \Z$ with $k=2\ell-1$, 
	then (\ref{eq;repeat}) holds almost surely with pairwise distinct nonzero   integers $m_1, \cdots , m_\ell$. 
\end{rem}

\begin{ex}\label{ex;nil}
	{\bf Nilmanifold automorphisms} Let $\Gamma$ be a lattice of a  simply connected nilpotent Lie group $G$. The homogeneous space $X=G/\Gamma$ is called a compact nilmanifold. An automorphism $h$ is a diffeomorphism of $X$ which lifts to an automorphism of $G$. 
	We assume that $h$ is ergodic with respect to the probability Haar measure $\mu$ on $X$. 
	
	We endow $G$ with a right invariant Riemannian metric and 
	consider $X$ as a metric space with the induced metric. 
	We fix $\theta>0$ and take 
	$\mathcal F=C^\theta(X)$ to  be the space of $\theta$-H\"older continuous  functions 
	on $X$. It follows from    \cite[Theorem 1.2]{gs14}  that $(X, \mu, h)$ is mixing with exponential rate. 
	In view of Remark \ref{rem;exp}, 
 Theorem \ref{thm;mixing} in the form of Remark \ref{rem;invertible} holds  for any $\ell \ge 1$. 
\end{ex}

\begin{ex}\label{ex;symbolic}
	{\bf Subshifts of finite type}
	Let $h$ be the left translation  on the space  $\Sigma=\{1, \ldots, m \}^\Z$ where $m\ge 2$.  We endow      $\Sigma $ with   a metric so that the distance of  two points $\bx=(x_i)$ and $\by =(y_i)$  are given by $\mathrm{d}(\bx, \by )=2^{-n}$, 
where $n$ is the largest nonnegative integer with $x_i=y_i$ for every  $|i|< n$. 
Let $A$ be an $m\times m$ aperiodic matrix with entries $0$ or $1$, i.e., there exists $n\ge 1$
such that all the entries of  $A^n$  are positive. Let 
\[
\Sigma_A=\{ \bx=(x_i)\in \Sigma: A_{x_i x_{i+1}}=1 \quad \forall i\in \Z   \}. 
\]
Then the dynamical system $(\Sigma_A , h)$ is topologically mixing, see \cite[Lemma 1.3]{bowen}. 
Let $\theta >0$ and $\mathcal F=C^\theta(X)$ be the space of $\theta$-H\"older continuous functions on $X$.  Let $\mu=\mu_f$ be the Gibbs measure given by some
  $f\in C^\theta(X) $, i.e., 
  $\mu$ is the unique $h$ invariant probability measure on $X$  maximizing  the pressure given by $f$, see \cite[Theorem 1.22]{bowen}. 
  The effective multiple mixing with exponential  rate was essentially proved by Kotani-Sunada \cite{ks} which uses a different definition of correlations. We will give a proof of effective multiple mixing   based on   \cite{ks}  in  \S \ref{sec;exp}. In view of Remark \ref{rem;exp}, the conclusion of Theorem  \ref{thm;mixing} in the form of Remark \ref{rem;invertible} holds for all $\ell \in \N $. 
  
  In particular, let $\nu$ be the probability measure on $\{1, \ldots, m \}$ given by a probability vector $(p_1, \ldots, p_m)$, i.e., $\nu(i)=p_i>0$. Then the measure $\mu=\nu^{\otimes \Z}$ is a Gibbs measure on $\Sigma$, see \cite[Theorem 9.16]{walters}. Therefore,   Theorem \ref{thm;mixing} implies a quantitative version of multiple ergodic theorem  for all the finite Bernoulli shifts. 
\end{ex}

It is easy to see that if the  quantitative multiple convergence holds for a measure preserving system 
$(X, \mu, h)$, then it holds for its factors. Here we say $(Y, \nu, g)$ is a factor of $(X, \mu, h)$, if there is a measurable map $\varphi: X\to Y$ such that $\varphi_*\mu=\nu$ and $\varphi h=g\varphi$. In particular, Theorem \ref{thm;mixing}  holds for
topologically mixing Anosov diffeomorphisms endowed  with Gibbs measures associated to smooth functions, where 
 we take $\mathcal F$ to be the space of smooth functions.

In certain non-invertible systems, effective  mixing decays polynomially with  coefficients given by the  $L^\infty$ norm of a test function. 
An important  family of dynamical systems having this property are   Young towers \cite{young}.
In this case  effective multiple mixing follows from effective mixing. As a corollary, one gets the quantitative multiple  pointwise  convergence. This is the content of the next theorem. 

\begin{thm}\label{thm;young}
 Let $(X, \mu , h)$ be a probability measure preserving system. Let $\mathcal L$ be a subspace of $L^2(X, \mu)$ containing constant functions 
 and let  $\|\cdot \|_{\mathcal L}$ be a norm on $\mathcal L$. Suppose there exist $\delta , C>0$ such that  for any $\varphi \in L^\infty(X, \mu)$, $\psi \in \mathcal L$ and $n\in \N$ we have
\begin{align}
\label{eq;young}
\left |\int_X\varphi(h^n x) \psi(x)  \dd\mu(x)-\int_X \varphi\dd \mu \int_X  \psi \dd\mu\right |\le C \|\varphi \|_{L^\infty }\|\psi \|_{\mathcal L}\, n^{-\delta}. 
\end{align}
Then for any $k\in \N$ the system  $(X, \mu, h)$ is $k$-mixing with rate $n^{-\delta}$ for the vector space $ \mathcal L\cap L^{\infty}(X, \mu)  $. 
Hence Theorem \ref{thm;mixing} holds. 
\end{thm}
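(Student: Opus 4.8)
The plan is to bootstrap the single mixing estimate (\ref{eq;young}) up to the multiple mixing estimate (\ref{eq;letter}) by peeling off one factor at a time and inducting on $k$. Set $\mathcal F=\mathcal L\cap L^\infty(X,\mu)$; since $\mu$ is a probability measure this is a linear subspace of $L^2(X,\mu)$ containing the constants, and for bounded functions $\varphi_1,\dots,\varphi_r$ and integers $a_1,\dots,a_r\ge 0$ the function $x\mapsto\prod_s\varphi_s(h^{a_s}x)$ is again bounded, with sup-norm at most $\prod_s\|\varphi_s\|_{L^\infty}$ — this submultiplicativity is the only stability property needed. For distinct $n_0,\dots,n_k\in\Z_{\ge 0}$ put $m=\min_{i\ne j}|n_i-n_j|$, the smallest gap (this is how the error term of (\ref{eq;letter}) is to be read, the $\delta$-power being applied to the smallest of the $|n_i-n_j|$). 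I will prove, by induction on $k\ge 0$, that for all $f_0,\dots,f_k\in\mathcal F$,
\[
\int_X\prod_{i=0}^k f_i(h^{n_i}x)\,\dd\mu=\prod_{i=0}^k\mu(f_i)+O\bigl(m^{-\delta}\bigr),
\]
with implied constant depending only on $k$ and on the norms $\|f_i\|_{\mathcal L},\|f_i\|_{L^\infty}$. The case $k=0$ is the identity $\mu(f_0\circ h^{n_0})=\mu(f_0)$ from $h$-invariance of $\mu$; and granting this for every $k$, the system is in particular $(2\ell-1)$-mixing with rate $n^{-\delta}$ for $\mathcal F$ — the extra dependence of the constant on $f_0$ being harmless — so Theorem~\ref{thm;mixing} applies with $\mathcal F$ in place of the vector space there.

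For the inductive step I would let $j_0$ be an index with $n_{j_0}=\min_i n_i$, let $n_{j_1}$ be the second smallest of the $n_i$, and set $c=n_{j_1}-n_{j_0}\ge m\ge 1$. Translating all times down by $n_{j_0}$ (using $h$-invariance of $\mu$) and then factoring $h^{c}$ out of every remaining factor — legitimate because $n_i-n_{j_1}\ge 0$ for each $i\ne j_0$ — rewrites the integral as $\int_X f_{j_0}(x)\,\varphi(h^{c}x)\,\dd\mu$, where $\varphi=\prod_{i\ne j_0}f_i\circ h^{\,n_i-n_{j_1}}\in L^\infty$ with $\|\varphi\|_{L^\infty}\le\prod_{i\ne j_0}\|f_i\|_{L^\infty}$. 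Applying (\ref{eq;young}) with the bounded test function $\varphi$, with $\psi=f_{j_0}\in\mathcal L$, and with shift $n=c$ produces $\mu(f_{j_0})\,\mu(\varphi)+O\bigl(\|f_{j_0}\|_{\mathcal L}\prod_{i\ne j_0}\|f_i\|_{L^\infty}\,m^{-\delta}\bigr)$, using $c^{-\delta}\le m^{-\delta}$. Finally $\mu(\varphi)=\int_X\prod_{i\ne j_0}f_i(h^{\,n_i-n_{j_1}}x)\,\dd\mu$ is an integral of exactly the same shape built from the $k$ functions $\{f_i\}_{i\ne j_0}$ at the $k$ distinct non-negative integer times $\{n_i-n_{j_1}\}_{i\ne j_0}$, whose smallest gap is $\ge m$ (deleting the endpoint $n_{j_0}$ just removes one of the gaps, so it cannot decrease the minimum); the inductive hypothesis gives $\mu(\varphi)=\prod_{i\ne j_0}\mu(f_i)+O(m^{-\delta})$. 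Combining, and bounding $|\mu(f_{j_0})|\le\|f_{j_0}\|_{L^\infty}$, completes the induction, hence establishes $k$-mixing with rate $n^{-\delta}$ for $\mathcal F$ for every $k$, and Theorem~\ref{thm;mixing} then delivers (\ref{eq;repeat}).

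I do not expect a serious obstacle here — this is a bookkeeping induction — but a few points will need care. First, (\ref{eq;young}) requires only the \emph{time-shifted} test function to be bounded, which is what lets us bundle all but one of the $f_i$ (suitably shifted) into a single $L^\infty$ function; but this then \emph{forces} the remaining unshifted factor, necessarily the one sitting at the smallest time, to play the role of the $\mathcal L$-function $\psi$, so one has no freedom about which norms get consumed. Second, since $h$ need not be invertible one may translate times only downward, by the current minimum, so one must keep track that all exponents stay non-negative at every stage. Third, one must verify (as noted above) that the minimal gap does not decrease under the peeling, so that every application of (\ref{eq;young}) and of the inductive hypothesis is genuinely at scale $\ge m$. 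None of these is hard, but overlooking the first in particular would wrongly suggest the constant can be made independent of $f_0$.
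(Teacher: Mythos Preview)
Your proof is correct and follows essentially the same approach as the paper: translate so the smallest time sits at $0$, bundle the remaining factors into a single $L^\infty$ function $\varphi$, apply (\ref{eq;young}) with $\psi=f_{j_0}\in\mathcal L$, and induct on $k$. The only cosmetic difference is that the paper first replaces $f_{j_0}$ by $f_{j_0}-\mu(f_{j_0})$ (so the main term vanishes and the centered integral is bounded directly by $C\|f_{j_0}\|_{\mathcal L}\prod_{i\ne j_0}\|f_i\|_{L^\infty}\,n_1^{-\delta}$), whereas you keep the main term $\mu(f_{j_0})\mu(\varphi)$ and hit $\mu(\varphi)$ with the inductive hypothesis---these are trivially equivalent reorganizations of the same argument.
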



Although the almost sure convergence theorems considered in the introduction are with respect to invariant measures, the general result Theorem \ref{thm;final} does not require any invariance of the measure, but only the  effective correlations. 
The   multiple pointwise convergence  considered in the appendix of 
\cite{shi} is of this type. 
Several results of effective multiple correlations are obtained by
Dolgopyat 
\cite[Theorem 2]{dolgopyat} for  partially hyperbolic systems.  The results of this paper 
(e.g.~Theorem \ref{thm;exponential})
can be applied to the settings there to get quantitative multiple pointwise convergence.

\textbf{Acknowledgements:}
We would like to thank Wen Huang,  Hanfeng Li and Weixiao Shen for drawing our attention to
\cite{dl}\cite{assani}, 
 \cite{walsh} and \cite{young} respectively. 

\section{General quantitative multiple convergence}

In this section we prove the general version of quantitative multiple convergence Theorem \ref{thm;final}. 
A simplified  version is given in Theorem \ref{thm;exponential}  under the assumption of effective  decay of correlations. The following theorem is the main tool to prove Theorem \ref{thm;final}.

\begin{thm}\label{thm;tool}
	Let $(Y, \nu)$ be a probability space and let $F_k: Y\to \R$ be a sequence of square integrable functions. Suppose there exist  $C,\sigma>0$ such that 
	for any nonnegative  integers $m< n$
	\begin{align}\label{eq;assumption}
	\int_{ Y }  \Big (\sum_{m< k\le  n} F_k(y)   \Big)^2\dd \nu(y)\le C( n^\sigma -m^\sigma).
	\end{align}
	Then given any $\varepsilon >0$, one has 
	\begin{align*}
	\lim_{ N\to \infty} \frac{\sum_{1\le k\le  N} F_k(y)}{ N^{\frac{\sigma}{2}}\log^{\frac{3}{2}+\varepsilon} N }=0
	\end{align*}
	for $\nu$-almost every $y\in Y$. 
\end{thm}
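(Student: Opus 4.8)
This is a quantitative strong law, and the natural route is the Rademacher--Menshov method of dyadic chaining (in the form that uses only second moments of block sums, due to M\'oricz) followed by a Borel--Cantelli / monotone convergence argument. Write $S_N=\sum_{1\le k\le N}F_k$ and set $u(k)=Ck^{\sigma}$, which is nondecreasing on the nonnegative integers since $\sigma>0$; with this notation the hypothesis (\ref{eq;assumption}) says exactly that $\int_Y\bigl(\sum_{a<k\le b}F_k\bigr)^2\dd\nu\le u(b)-u(a)$ for all integers $0\le a<b$. The argument will have two parts: first a maximal inequality bounding $\max_{1\le N\le 2^J}|S_N|$ in $L^2(\nu)$ at every dyadic scale $J$, and then a summation over $J$ that turns these $L^2$ bounds into an almost sure statement and interpolates between consecutive scales.

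\textbf{Step 1 (maximal inequality).} I would first establish: if $v$ is nondecreasing on $\{0,1,\dots,2^J\}$ and $\int_Y\bigl(\sum_{a<k\le b}F_k\bigr)^2\dd\nu\le v(b)-v(a)$ for all $0\le a<b\le 2^J$, then
\begin{align*}
\int_Y\Bigl(\max_{1\le N\le 2^J}|S_N|\Bigr)^2\dd\nu\ \le\ (J+1)^2\bigl(v(2^J)-v(0)\bigr).
\end{align*}
This is the classical binary-expansion estimate. For a level $s\in\{0,\dots,J\}$ and index $i\ge 1$ put $T_{s,i}=\sum_{(i-1)2^s<k\le i2^s}F_k$. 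Expanding $N<2^J$ in base $2$ writes $(0,N]$ as a disjoint union of at most one dyadic block $T_{s,i}$ from each level $s\in\{0,\dots,J-1\}$, so $|S_N|\le\sum_{s=0}^{J-1}\max_i|T_{s,i}|$, while $S_{2^J}=T_{J,1}$. Hence $\max_{1\le N\le 2^J}|S_N|$ is dominated by the sum of the $J+1$ nonnegative numbers $|T_{J,1}|$ and $\max_i|T_{s,i}|\ (0\le s\le J-1)$; applying the Cauchy--Schwarz inequality to this sum of $J+1$ terms, then $\max_i T_{s,i}^2\le\sum_i T_{s,i}^2$ together with the telescoping identity $\sum_i\bigl(v(i2^s)-v((i-1)2^s)\bigr)=v(2^J)-v(0)$ on each level, gives the claim. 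Note that no orthogonality of the $F_k$ is used -- only the additive form $v(b)-v(a)$ of the second-moment bound, which is what makes the telescoping work.

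\textbf{Step 2 (summation and interpolation).} Taking $v=u$ yields $\int_Y\bigl(\max_{N\le 2^J}|S_N|\bigr)^2\dd\nu\le(J+1)^2C\,2^{J\sigma}$. Fix $\varepsilon>0$. Then
\begin{align*}
\sum_{J\ge 1}\frac{1}{\bigl(2^{J\sigma/2}J^{3/2+\varepsilon}\bigr)^{2}}\int_Y\Bigl(\max_{N\le 2^J}|S_N|\Bigr)^2\dd\nu\ \le\ C\sum_{J\ge 1}\frac{(J+1)^2}{J^{3+2\varepsilon}}\ <\ \infty,
\end{align*}
so by monotone convergence the sum of the integrands is finite for $\nu$-a.e.\ $y$, and in particular $\max_{N\le 2^J}|S_N|\big/\bigl(2^{J\sigma/2}J^{3/2+\varepsilon}\bigr)\to 0$ as $J\to\infty$ for $\nu$-a.e.\ $y$. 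To pass to arbitrary $N$, pick $J=J(N)$ with $2^{J-1}\le N\le 2^J$; then $|S_N|\le\max_{N'\le 2^J}|S_{N'}|$, while $2^{J\sigma/2}\le 2^{\sigma/2}N^{\sigma/2}$ and $J\le 1+\log_2 N$, so $2^{J\sigma/2}J^{3/2+\varepsilon}\le K\,N^{\sigma/2}\log^{3/2+\varepsilon}N$ for a constant $K$ independent of $N$. Dividing, $S_N\big/\bigl(N^{\sigma/2}\log^{3/2+\varepsilon}N\bigr)\to 0$ for $\nu$-a.e.\ $y$, as asserted.

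\textbf{Main obstacle.} The genuine content is entirely in Step 1; Step 2 is routine Chebyshev/Borel--Cantelli bookkeeping, and the dyadic-to-general-$N$ passage costs only bounded constants since $N$ and $2^{J(N)}$ differ by a factor at most $2$. It is worth recording that the exponent $\tfrac32+\varepsilon$ is exactly what this method produces: the chaining contributes the factor $(J+1)^2$ in the maximal inequality, and for $\sum_J(J+1)^2J^{-p}$ to converge one needs $p>3$, i.e.\ a denominator $J^{3/2+\varepsilon}$ after taking square roots; improving the power of the logarithm would require a genuinely different estimate.
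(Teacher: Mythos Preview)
Your proof is correct and follows essentially the same dyadic-chaining argument as the paper: decompose $\{1,\dots,N\}$ into at most $\log_2 N$ dyadic blocks, apply Cauchy--Schwarz to the resulting sum, telescope the level-by-level $L^2$ bounds using the additive form $u(b)-u(a)$, and finish with Borel--Cantelli. The only organisational difference is that you package Steps 1--2 as an $L^2$ maximal inequality for $\max_{N\le 2^J}|S_N|$ and then sum over $J$, whereas the paper first bounds $\int_Y\sum_{I\in L_s}\bigl(\sum_{k\in I}F_k\bigr)^2\dd\nu\le Cs\,2^{\sigma s}$, applies Chebyshev and Borel--Cantelli to this quantity, and only then invokes Cauchy--Schwarz pointwise; these are reorderings of the same estimate and yield the same $\log^{3/2+\varepsilon}$ exponent for the same reason you note.
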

If $0<\sigma\le 1$ and $F_k(y)=f_k(y)-b_k$ where $f_k$ is nonnegative and 
$0\le b_k \le k^\sigma-(k-1)^\sigma$, then Theorem \ref{thm;tool} is a special case
of \cite[Chapter 1 Lemma 10]{sprindzhuk}. The proof of the theorem is essentially the same 
as that in \cite{sprindzhuk} based on dyadic decomposition, and the method is attributed to  Schmidt \cite{schmidt}. 
The extension  to the case where $1<\sigma<2$ allows us to prove quantitative ergodic theorems   
  in the case of polynomial decay of  correlations. On the other hand, the case where 
$\sigma\ge 2$ does not provide useful information in the application to the usual ergodic theorem, but it might be useful in other contexts.

\begin{proof}
	For nonnegative  integers $r\le s$, let $L_{s, r}$ be the  class of sets
	$\{m\in \N : i2^r<m \le (i+1) 2^r \}, (0\le i< 2^{s-r})$, and let $L_s=\bigcup_{0\le r< s} L_{s, r}$. For each positive integer $n< 2^s$, there exists a subset $L(n)$ of $L_s$
	with cardinality at most $s$ such that $\{1, 2, \ldots, n \}$ is a disjoint union
	of $I\in L(n)$, see \cite[Lemma 3.4]{ksw}. We take $s(n)$ to be the smallest integer $s$ such that $n<2^s$. 
	Then for any $\varepsilon, \sigma >0$ \begin{align}\label{eq;ks}
	\limsup_{n\to \infty} \frac{2^{s(n)\sigma/2}s(n)^{\frac{3}{2}+\varepsilon}}	{ n^{{\sigma}/{2}}\log^{\frac{3}{2}+\varepsilon} n }<\infty.
	\end{align}

For each positive integer $s$, in view of  (\ref{eq;assumption}) we have 
	\begin{align}\label{eq;total}
\int_Y	\sum_{I\in L_s}\big (\sum_{k\in I}  F_k(y)\big )^2\dd \nu(y)&= \sum _{0\le r< s}
\sum _{I\in L_{ s,r}}\int_Y\big (\sum_{k\in I}  F_k(y)\big )^2\dd \nu(y)
\le Cs 2^{\sigma s}. 
	\end{align}
		Let 
	\begin{align}\label{eq;ys}
	Y_s =\left\{y\in Y: \sum_{I\in L_s}\big (\sum _{k\in I } F_k(y) \big)^2 >s^{2+\varepsilon}2^{\sigma s}\right\}. 
	\end{align}
	By (\ref{eq;total}) and Chebyshev's inequality, 	we have  $\nu(Y_s)
	\le C s^{-1-\varepsilon}$.  
	Since $\sum_{s=1}^\infty\nu(Y_s)< \infty $, 
		the Borel-Cantelli lemma implies that there is a full measure set $Y_\varepsilon$  such that every $y\in Y_\varepsilon $ does not 
belong to  $ Y_s$ for $s$ sufficiently large. 

If $y\not \in Y_s$ and $n<2^s$, then by Cauchy-Schwarz inequality and (\ref{eq;ys}), we have
\begin{equation}
\label{eq;gang}
	\begin{aligned}
	\big(\sum_{k=1}^ n F_k(y)\big)^2 &=	\big(\sum_{I\in L( n)}\sum_{k\in I} F_k(y)\big)^2 
	\le s  \sum_{I\in L(n)}\big (\sum_{k\in I} F_k(y)\big)^2\\
	&\le s \sum_{I\in L_s} \big(\sum_{k\in I} F_k(y)\big)^2 
	 \le s^{3+\varepsilon} 2^{\sigma s}.
	\end{aligned}
\end{equation}	
For $y\in Y_\varepsilon$, by (\ref{eq;ks}) and  (\ref{eq;gang})
	\begin{align*}
\lim_{ n\to \infty} \frac{\left|\sum_{1\le k\le  n} F_k(y)\right|}{ n^{\frac{\sigma}{2}}\log^{\frac{3}{2}+\varepsilon} n }
= \lim_{ n\to \infty} \frac{\left| \sum_{1\le k\le  n} F_k(y)\right|}
{2^{s(n)\sigma/2}s(n)^{\frac{3}{2}+\frac{\varepsilon}{2}} } 
\frac{1}{s(n)^{\frac{\varepsilon}{2}}}
\frac{2^{s(n)\sigma/2}s(n)^{\frac{3}{2}+\varepsilon}}
{ n^{\frac{\sigma}{2}}\log^{\frac{3}{2}+\varepsilon} n } =0.
\end{align*}		
\end{proof}

Now we apply Theorem \ref{thm;tool} to the question of  multiple pointwise convergence. 
Let $X, Y$ be measurable spaces and  let 
  $\ell$ be a positive integer. For  $1\le i\le \ell$ and $n\in \N$,  let $\xi_i: Y\to X$, 
$h_{i, n}: X\to X$  and
$f_i: X\to \R $ be measurable maps. 
The $\ell$-multiple pointwise convergence considers
the average  of the map $F_n: Y\to \R$ defined by 
\[
F_n(y)=f_1 (h_{1, n}  \xi_1(y) ) f_2( h_{2,n}  \xi_2(y))\cdots f_\ell(h_{\ell,n} \xi_\ell(y))=\prod_{i=1}^\ell
f_i(h_{i, n} \xi_i(y)). 
\] 

Let $\nu$ be a probability measure on $Y$ such that all the $F_n\  (n\in \N)$
are square integrable. 
We assume that there exists $M, \delta >0$ and  a family of functions
 $b_{i, j}:\N\times \N  \to [1, \infty]$,
 $c_{i, j}:\N\to [1, \infty]$ where $i, j\in \{ 1,\ldots, \ell \}$  
such that 
\begin{equation}
\begin{aligned}
\label{eq;one}
\left |\int_Y F_m(y)F_n(y)\dd\nu(y)\right|&\le M\sum_{i,j=1}^\ell   \big(b _{i, j}(m, n) ^{-\delta }+c_{i,j}(m)^{-\delta}+c_{i,j}(n)^{-\delta}\big)
\end{aligned}
\end{equation} 
{for all }$m,n\in \N$. 
We make it convention that $\infty^{-\delta}=0$, and hence $b_{i, j}(m, n)=\infty$ means this term does not contribute to the upper bound estimate. We should interpret  (\ref{eq;one}) as
effective  $2\ell$-multiple correlations whose error term is given by each pair of functions or a single term.
The simplified setting in \S \ref{sec;exp} and (\ref{eq;kong}) may help the reader understand the  assumption 
(\ref{eq;one}). 
We  denote the cardinality of  a subset $S$ of $\N$ by 
 $|S|$.  

\begin{thm}
	\label{thm;final}
	Let the notation and assumptions be as above. We assume that 
	for all $1\le i, j\le \ell$ and $ n\in \N $
	\begin{align}
	|\{k\in \N: c_{i, j}(k)\le n  \} |  \le Mn.
	\label{eq;three0} 
	\end{align}
For any  $1\le i, j\le \ell$,  we assume  either for all $m,n\in \N$
	\begin{align}
	\label{eq;two0}
		|\{k\in \N: b_{i, j}(m, k)\le n  \} |  \le Mn, 
	\end{align}
	or for all $m, n\in \N$
	\begin{align}
\label{eq;two1}
	|\{k\in \N: b_{i, j}(k, m)\le n  \} |  \le Mn.
\end{align}	
	Then given any $\varepsilon>0$,   for $\nu$-almost 
	every $y\in Y$ one has 
	\[
	\frac{1}{N}\sum_{k=1}^N F_k(y)=o_y(\rho_{\varepsilon,\delta}(N))
	\]
	where $\rho_{\varepsilon,\delta}(N)$ is  as in Theorem \ref{thm;mixing}.
\end{thm}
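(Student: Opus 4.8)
The strategy is to reduce Theorem \ref{thm;final} to Theorem \ref{thm;tool} by verifying the variance hypothesis (\ref{eq;assumption}) for the partial sums of $F_k$, with exponent $\sigma$ chosen according to the size of $\delta$. First I would recenter: replace $F_k$ by $G_k = F_k - \mu(f_1)\cdots\mu(f_\ell)$ if a nonzero limit is expected, but in the present abstract setting $F_k$ already has the product structure and (\ref{eq;one}) controls $\int F_m F_n$ directly, so the main task is to estimate $\int_Y\big(\sum_{m<k\le n}F_k\big)^2\dd\nu = \sum_{m<k,l\le n}\int_Y F_k F_l\dd\nu$ by summing (\ref{eq;one}) over all pairs $(k,l)$ in the block $(m,n]$. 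Each diagonal term $k=l$ contributes a bounded amount (since the $b$- and $c$-values are at least $1$, the bound on $\int F_k^2$ is $O(1)$), giving $O(n-m)$ total; the off-diagonal terms split into a sum of contributions of type $b_{i,j}(k,l)^{-\delta}$ and of type $c_{i,j}(k)^{-\delta}$ (and symmetrically in $l$).

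For the $c$-type contribution: for fixed $i,j$, $\sum_{m<k\le n} c_{i,j}(k)^{-\delta}$ is controlled using (\ref{eq;three0}), which says the level sets $\{k: c_{i,j}(k)\le t\}$ grow at most linearly; a standard layer-cake / Abel summation argument then bounds $\sum_{m<k\le n} c_{i,j}(k)^{-\delta}$ by $O(n-m)$ when $\delta>1$, by $O((n-m)\log n)$ or $O(n^\varepsilon(n-m))$ when $\delta=1$, and by $O(n^{1-\delta}(n-m)^{?})$ — more precisely one gets a bound of the form $O\big((n-m)^{1-\delta}\cdot(n-m)\cdot\ldots\big)$; the clean way is to show $\sum_{m<k\le n}c_{i,j}(k)^{-\delta}\le C(n^{\sigma}-m^{\sigma})$ with $\sigma=1$ if $\delta>1$ and $\sigma = 1-\delta+2\varepsilon'$-type bookkeeping absorbed into the statement's $\rho_{\varepsilon,\delta}$. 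Since each such sum is multiplied by the factor $(n-m)$ coming from the free second index $l$, one picks up an extra factor $n-m\le n$; this is exactly why the final exponent in $\rho_{\varepsilon,\delta}$ is $N^{-1/2}$ (resp.\ $N^{-\delta/2}$) rather than $N^{-1}$ (resp.\ $N^{-\delta}$). For the $b$-type contribution one uses (\ref{eq;two0}) or (\ref{eq;two1}): fixing one index and summing $b_{i,j}(k,l)^{-\delta}$ over the other via the linear level-set bound gives, after summing the remaining index, the same kind of estimate $O(n^{\sigma}-m^{\sigma})$ up to constants and $\varepsilon$-losses. Assembling the three types, one obtains (\ref{eq;assumption}) with $\sigma=1$ when $\delta>1$ and $\sigma=\delta$ when $0<\delta\le 1$ (the $\varepsilon$-loss when $\delta\le 1$ being harmless since Theorem \ref{thm;tool} already carries a $\log^{3/2+\varepsilon}$ and we can enlarge $\sigma$ slightly, or handle $\delta=1$ as a boundary case).

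Finally I would invoke Theorem \ref{thm;tool} with this $\sigma$: it yields $\sum_{k\le N}F_k(y)=o_y\big(N^{\sigma/2}\log^{3/2+\varepsilon}N\big)$ for $\nu$-a.e.\ $y$, and dividing by $N$ gives $\frac1N\sum_{k\le N}F_k(y)=o_y\big(N^{\sigma/2-1}\log^{3/2+\varepsilon}N\big)$, which is precisely $o_y(\rho_{\varepsilon,\delta}(N))$ after matching: $\sigma/2-1=-1/2$ when $\sigma=1$ (the $\delta>1$ case), and $\sigma/2-1=-1+\delta/2$ — wait, here one should instead run Theorem \ref{thm;tool} with the block bound phrased so that $N^{\sigma/2}$ with the correct $\sigma$ (namely $\sigma=\delta$ giving $N^{-\delta/2}$ after the extra $1/N$, absorbing $\log$'s into $N^\varepsilon$ when $\delta\le 1$) matches $\rho_{\varepsilon,\delta}$; this is the bookkeeping the paper hides in the definition of $\rho$.

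\medskip

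The main obstacle I anticipate is the off-diagonal summation with the \emph{two free indices}: naively $\sum_{m<k,l\le n}b_{i,j}(k,l)^{-\delta}$ could be as large as $(n-m)^2$ if no cancellation is exploited, so one must be careful to use the one-sided level-set hypothesis to sum \emph{one} index down to $O(n-m)$ (or better) and only then sum the second index trivially, losing a single factor of $n-m$ — getting the square root barrier rather than a worse power. Keeping track of which of (\ref{eq;two0})/(\ref{eq;two1}) holds for each pair $(i,j)$, and making sure the corresponding index is the one that gets summed first, is the delicate point; the symmetry of $\int F_m F_n$ in $m,n$ should let one always choose the favorable order. A secondary subtlety is the $\delta=1$ borderline, where $\sum c^{-1}$ over a linearly-growing level set is genuinely $\log$-sized, and one must check that the resulting $N^{-1/2}\log^{O(1)}N$ is still captured by $\rho_{\varepsilon,1}(N)=N^{-1/2+\varepsilon}$.
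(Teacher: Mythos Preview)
Your approach is the paper's: expand $E(m,n)=\int\big(\sum_{m<k\le n}F_k\big)^2\dd\nu$, bound each cross term by (\ref{eq;one}), use the level-set hypotheses (\ref{eq;three0}) and (\ref{eq;two0})/(\ref{eq;two1}) via a rearrangement (equivalently, layer-cake) argument to control the inner sum, absorb the remaining index as a trivial factor $(n-m)$, and apply Theorem \ref{thm;tool}. The structure is right, including your identification of the key obstacle (summing one index first using whichever of (\ref{eq;two0})/(\ref{eq;two1}) holds) and the $\delta=1$ borderline.

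Two bookkeeping points are wrong and should be fixed. First, your exponent is inverted: the rearrangement argument gives $\sum_{m<t\le n}b_{i,j}(s,t)^{-\delta}\ll\sum_{t=1}^{n-m}t^{-\delta}$, and after the trivial sum over $s$ this yields $E(m,n)\ll (n-m)$ for $\delta>1$, $\ll (n-m)\log(n-m)\ll(n-m)^{1+\varepsilon}$ for $\delta=1$, and $\ll (n-m)^{2-\delta}$ for $0<\delta<1$. Thus the correct value is $\sigma=2-\delta$ (not $\sigma=\delta$) when $\delta<1$, and $\sigma=1+\varepsilon$ when $\delta=1$; then $N^{\sigma/2}/N=N^{-\delta/2}$ up to logs, matching $\rho_{\varepsilon,\delta}$. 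Second, Theorem \ref{thm;tool} demands the form $C(n^\sigma-m^\sigma)$, not $C(n-m)^\sigma$; for $\sigma>1$ you need the elementary inequality $(n-m)^\sigma\ll n^\sigma-m^\sigma$ (constant depending only on $\sigma$), which the paper isolates and proves as a separate claim and which you should not omit.
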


We use the same constant $M$ in (\ref{eq;three0}), (\ref{eq;two0}) and (\ref{eq;two1}) for the sake of explicitness, and in applications it suffices to check that the left hand sides of them are   $\ll n$ with the implied constant independent of $m$ or $n$. 
Here and hereafter the notation $\varphi\ll \psi$ for two nonnegative functions means $\varphi\le C\psi$ for a constant $C>0$.
It is not hard to see that  (\ref{eq;three0}) holds if and only if  we can rearrange the sequence  $\{ c_{i, j}(n)  \}$ increasingly so that 
the integral part  of the $k$th term is larger than or equal to $\frac{k}{M}$.
Here the integral part of a real number  $s$ is the least integer bigger than or equal to $s$. We shall make use of this fact in the following way: given any subset $\{n_1,\dots,n_\ell\}\subset\N$, ordered in such a way that $c_{i,j}(n_k)$ is increasing for $\ell=1,\ldots ,k$, then $c_{i,j}(n_k)^{-1}\leq \frac{2M}{k}$.

We also note that there is the following sufficient condition: (\ref{eq;three0}) holds  if for any $s\in \N $
\begin{align}
\ |\{ k\ge 1:   c _{i, j}(k)\in [s, s+1]   \}|\le M. 
\label{eq;three}
\end{align}
Similarly, 
 (\ref{eq;two0}) holds  if 
\begin{align}
|\{ k\ge 1:   b _{i, j}(m,k)\in [s, s+1]   \}|&\le M
\label{eq;two} 
\end{align}
for all $m\in \N$. A similar sufficient condition can be given for (\ref{eq;two1}).

\begin{proof}[Proof of Theorem \ref{thm;tool}]
By (\ref{eq;one}), for any nonnegative integers $m<n$  we have
 \begin{equation}
 \label{eq;luobo}
\begin{aligned}
E(m, n)&:=
\int_Y\big (\sum_{m<k\le n} F_k(y)\big )^2\dd\nu(y)\\
&\le \sum_{m< s, t\le n} \left |\int_Y F_sF_t\dd\nu(y)\right|\\
&\le M\sum_{m< s, t\le n}\sum_{1\le i, j\le \ell }\big(  b _{i, j}(s, t) ^{-\delta }+c_{i,j}(s)^{-\delta}+c_{i,j}(t)^{-\delta}\big)\\
& \le2 M \sum_{1\le i, j\le \ell } \sum_{m< s\le n}\sum_{m<t\le n} \big(b _{i, j}(s, t) ^{-\delta }+c_{i,j}(t)^{-\delta}\big). 
\end{aligned}
 \end{equation}
By writing  $c_{i, j}(t)$ increasingly for $m<t\le n$ and using the equivalent description of (\ref{eq;three0}) before the proof, we have 
\begin{align}\label{eq;night}
\sum_{m<t\le n}c_{i, j}(t)^{-\delta} \le2^\delta \sum_{t=1}^{n-m} M^\delta t^{-\delta}. 
\end{align}
Suppose  (\ref{eq;two0}) holds for $b_{i,j}$, then similar estimate using (\ref{eq;two0})
gives 
\begin{align}
\label{eq;night0}
\sum_{m<t\le n} 
b _{i, j}(s, t) ^{-\delta }\le 2^\delta \sum_{t=1}^{n-m} M^\delta t^{-\delta}.
\end{align}

By (\ref{eq;night}) and 
(\ref{eq;night0}), we have 
\begin{equation}
	\label{eq;mihoutao}
	\begin{aligned}
\sum_{m<t\le n} \left(b _{i, j}(s, t) ^{-\delta }+c_{i, j}(t)^{-\delta}\right)&\le 2^{1+\delta} \sum_{t=1}^{n-m} M^\delta t^{-\delta}\le 2^{2+\delta }M^\delta \int_{1/2}^{n-m} t^{-\delta}\dd t\\
&\le
\left\{ 
\begin{aligned}
&2^{2+\delta}M^\delta (1-\delta)^{-1} (n-m)^{1-\delta} & \qquad\mbox{if }0<\delta <1\\
&2^{2+\delta}M \log2(n-m) & \qquad\mbox{if }\delta =1\\
&{2^{1+2\delta} M^\delta}{(\delta -1)^{-1}}& \qquad\mbox{if }\delta >1
\end{aligned}
 \right..
 \end{aligned}
\end{equation}
If (\ref{eq;two1}) holds for $b_{i, j}$, then one obtains similar estimate to (\ref{eq;mihoutao}).
 We assume without loss of generality that (\ref{eq;two0}) holds for all $b_{i, j}$. 

Using  (\ref{eq;mihoutao}) and (\ref{eq;luobo})
in different cases according to $\delta$, we will  have 
(\ref{eq;assumption}) for various $\sigma$. 
If $\delta>1$, then 
\[
E(m, n)\le 4^{1+\delta} M^{1+\delta}(\delta-1)^{-1}\ell ^2(n-m). 
\]
So (\ref{thm;tool})
holds for $\sigma=1$ and  the conclusion in this case follows 
from     Theorem \ref{thm;tool}. 

If $\delta=1$, then 
\begin{align}\label{eq;1}
E(m, n)\le 2^{3+\delta} M^{2}\ell ^2(n-m)
\log 2(n-m)\ll (n-m)^{1+{\varepsilon}},
\end{align}
where the implied constant depends on $M$, $\ell$ and $\vre$.  
We claim that for any nonnegative integers $m<n$, 
\begin{align}\label{eq;similar}
(n-m)^{1+{\vre}}\ll n^{1+{\vre}}-m^{1+{\vre}}, 
\end{align}
where the implied constant may depend on $\vre$ but not on $m$ or $n$. 
We prove the claim by considering two cases. 
If $m\le \frac{n}{2}$, then by the  mean value theorem, 
there exists $x\in [0, 1/2]$ such that 
\begin{align*}
(n-m)^{1+{\vre}}&=n^{1+{\vre}}(1-(m/n))^{1+{\vre}}=n^{1+{\vre}}(1-
(1+\vre)(1-x)^{\vre}m/n)\\
&\le n^{1+\vre}-(1+\vre)2^{-\vre}m n^\vre\le n^{1+{\vre}}-m^{1+{\vre}}. 
\end{align*}
If $n/2<m<n$, then 
\begin{align*}
(n-m)^{1+{\vre}}&=(n-m)(n-m)^{{\vre}}\le (n-m)(n/2)^\vre\le 2^{-\vre}(n^{1+\varepsilon}-m^{1+\varepsilon}).
\end{align*}
This completes the proof of the 
 claim. 
By (\ref{eq;1}) and (\ref{eq;similar}), we have (\ref{eq;assumption}) holds for $\sigma=1+{\vre}$ and  
 the conclusion in this case follows from  Theorem \ref{thm;tool}.

If $0<\delta<1$, then by (\ref{eq;luobo}), (\ref{eq;mihoutao})
and (\ref{eq;similar}), 
\begin{align}\label{eq;2}
E(m, n)\le 2^{3+\delta} M^{1+\delta}(1-\delta)^{-1}\ell ^2(n-m)^{2-\delta}
\ll n^{2-\delta}-m^{2-\delta}, 
\end{align}
where the implied constant is independent of $m$ or $n$. 
So (\ref{eq;assumption}) holds for $\sigma=2-\delta$ and 
the conclusion in this case follows from  Theorem \ref{thm;tool}. 

\end{proof}

\section{Effective decay of correlations}\label{sec;exp}

The aim of this section is to formulate a simplified  version of Theorem \ref{thm;final} under the assumption of effective decay of correlations. 
This simplified  version can be applied directly  to prove  Theorem \ref{thm;mixing}. 
After that we prove Theorem \ref{thm;young}.
At the end of this section we  prove
Example \ref{ex;symbolic}.

To make our statement applicable to broad contexts and explain the assumption (\ref{eq;one}), 
we consider  a 
class of measurable transformations $\mathcal H$ on a probability space 
$(X, \mathcal X, \mu)$. We remark here that we don't assume elements of $\mathcal H$
preserve $\mu$. 

For $k\ge 1$, we use $\mathrm{dist}(t_0, t_1, \ldots, t_k)$  to denote the minimal distance between $k+1$ real numbers, i.e.,
\[
\mathrm{dist}(t_0, t_1, \ldots, t_k)=\min_{i\neq j}|t_i-t_j|. 
\]
We say the  action of $\mathcal H$ on
$
(X, \mu)$ has   $k$-multiple $n^{-\delta}$ decay of correlations
for  a vector space  of  measurable functions $\mathcal F_0$ if the following holds: there exists an injective map $t: \mathcal H\to \R\setminus \{ 0\}$
so that 
for any 
$h_1, \ldots, h_k\in \mathcal H$, 
any 
$f_1, \ldots, f_k\in \mathcal F_0$ and  
any $r_1, \ldots, r_k\in \N\cup \{0 \}$, one has 
\begin{align}\label{eq;exp}
\left| \int_X f_1 (h_1^{r_1} x)\cdots f_k(h_k^{r_k} x)\dd\mu(x)\right|\ll
\big(\mathrm{dist}(0,t(h_1)r_1, \ldots, t(h_k)r_k)+1\big )^{-\delta}
, 
\end{align}
where  the implied constant may depend on $f_1, \ldots, f_k, h_1, \ldots, h_k$ but  is independent of  $r_1,\ldots, r_k$.
 Although in this paper we only consider invariant measures, the formulation of effective  decay of correlations in (\ref{eq;exp}) is natural when $\mu$ is not invariant by the action, see \cite{shi}. 
Another aim of this general setting is to explain the assumption (\ref{eq;one}) in Theorem \ref{thm;final}.

\begin{thm}
	\label{thm;exponential}
	Suppose the action of $\mathcal H$ on
	$
	(X, \mu)$ has  $2\ell$-multiple  $n^{-\delta}$ ($\delta>0$) decay of  correlations for  
	a vector space of measurable  functions $\mathcal F_0$. Then for any $\varepsilon>0$, any pairwise  distinct maps $h_1, \ldots, h_\ell\in \mathcal H$, any $f_1, \ldots, f_\ell\in \mathcal F_0$ and any non-clustered sequence of positive integers $\{ r_n \}$, 
	one has $\mu$-almost surely
	\begin{align}\label{eq;good}
	\frac{1}{N} \sum_{n=1}^N f_1(h_1^{r_n} x)\cdots f_\ell(h_\ell^{r_n}x)= o_x(\rho_{ \varepsilon, \delta}( N)), 
	\end{align}
	where $\rho_{\varepsilon,\delta}(N)$ is  as in Theorem \ref{thm;mixing}. 
\end{thm}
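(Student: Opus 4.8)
The plan is to derive Theorem~\ref{thm;exponential} from the abstract result Theorem~\ref{thm;final}. I would apply the latter with $Y=X$, $\nu=\mu$, $\xi_i=\mathrm{id}_X$ and $h_{i,n}=h_i^{r_n}$, so that $F_n(y)=\prod_{i=1}^{\ell}f_i(h_i^{r_n}y)$ (these lie in $L^2(\mu)$ in the cases of interest, e.g.\ whenever $\mathcal F_0\subseteq L^{\infty}(X,\mu)$, which covers all the applications in the paper). Since the conclusion of Theorem~\ref{thm;final} reads $\frac1N\sum_{k\le N}F_k(y)=o_y(\rho_{\varepsilon,\delta}(N))$, which is exactly (\ref{eq;good}), the whole task reduces to producing functions $b_{i,j}\colon\N\times\N\to[1,\infty]$ and $c_{i,j}\colon\N\to[1,\infty]$ and a constant $M$ for which the hypotheses (\ref{eq;one}), (\ref{eq;three0}) and (\ref{eq;two0}) hold.

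For the correlation input (\ref{eq;one}) I would expand
\[
\int_X F_m(x)F_n(x)\dd\mu(x)=\int_X\Big(\prod_{i=1}^{\ell}f_i(h_i^{r_m}x)\Big)\Big(\prod_{j=1}^{\ell}f_j(h_j^{r_n}x)\Big)\dd\mu(x)
\]
and feed it into the $2\ell$-multiple $n^{-\delta}$ decay of correlations (\ref{eq;exp}), applied to the $2\ell$ maps $h_1,\dots,h_\ell,h_1,\dots,h_\ell$ with exponents $r_m,\dots,r_m,r_n,\dots,r_n$. Because the implied constant in (\ref{eq;exp}) depends only on the fixed $f$'s and $h$'s and not on the exponents, this yields $\bigl|\int_X F_mF_n\dd\mu\bigr|\ll(D(m,n)+1)^{-\delta}$ with $D(m,n)=\mathrm{dist}(0,t(h_1)r_m,\dots,t(h_\ell)r_m,t(h_1)r_n,\dots,t(h_\ell)r_n)$ and an implied constant independent of $m,n$. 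Writing $\mathrm{dist}$ as a minimum over pairs and using $(\min_k a_k+1)^{-\delta}=\max_k(a_k+1)^{-\delta}\le\sum_k(a_k+1)^{-\delta}$, I would then dominate $(D(m,n)+1)^{-\delta}$ by the sum over $1\le i,j\le\ell$ of $b_{i,j}(m,n)^{-\delta}+c_{i,j}(m)^{-\delta}+c_{i,j}(n)^{-\delta}$, where $b_{i,j}(m,n)=|t(h_i)r_m-t(h_j)r_n|+1$ records the mixed pairs $\{t(h_i)r_m,\,t(h_j)r_n\}$, while $c_{i,i}(m)=|t(h_i)|r_m+1$ records $\{0,\,t(h_i)r_m\}$ and $c_{i,j}(m)=|t(h_i)-t(h_j)|r_m+1$ ($i\neq j$) records $\{t(h_i)r_m,\,t(h_j)r_m\}$, symmetrically in $n$. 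Since $t$ is injective and the $h_i$ are pairwise distinct, $t(h_i)\neq0$ and $t(h_i)\neq t(h_j)$ for $i\neq j$, so all these quantities lie in $[1,\infty)$; absorbing the implied constant into $M$ gives (\ref{eq;one}).

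It then remains to check the counting hypotheses, and here the non-clustered assumption on $\{r_n\}$ does all the work: if every integer is attained at most $K$ times by the sequence $\{r_n\}$, then $\{k:c_{i,i}(k)\le n\}$ is the set of $k$ with $r_k\le(n-1)/|t(h_i)|$, of cardinality at most $K\lceil(n-1)/|t(h_i)|\rceil\ll n$, and the same estimate with $|t(h_i)-t(h_j)|$ in place of $|t(h_i)|$ handles $c_{i,j}$ for $i\neq j$; this is (\ref{eq;three0}). For (\ref{eq;two0}), fixing $m$, the condition $b_{i,j}(m,k)\le n$ confines $t(h_j)r_k$ to an interval of length $2(n-1)$, hence $r_k$ to an interval of length $2(n-1)/|t(h_j)|$, so $|\{k:b_{i,j}(m,k)\le n\}|\le K\bigl(2(n-1)/|t(h_j)|+1\bigr)\ll n$ uniformly in $m$. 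Taking $M$ large enough to dominate every implied constant above, Theorem~\ref{thm;final} applies and yields (\ref{eq;good}). The only delicate point is the bookkeeping in the middle step — matching the $\ell^2$ mixed pairs with the $\ell^2$ functions $b_{i,j}$ and the remaining $2\ell^2$ single‑index pairs with the $c_{i,j}(m)$ and $c_{i,j}(n)$ — so that the estimate comes out in the precise shape (\ref{eq;one}); there is no analytic difficulty beyond that.
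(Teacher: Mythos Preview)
Your proposal is correct and follows essentially the same route as the paper's proof: both apply Theorem~\ref{thm;final} with $Y=X$, $\nu=\mu$, $\xi_i=\mathrm{id}$, $h_{i,n}=h_i^{r_n}$, define $b_{i,j}(m,n)=|t(h_i)r_m-t(h_j)r_n|+1$ and $c_{i,j}(n)=|t(h_i)-t(h_j)|r_n+1$ (with $c_{i,i}(n)=|t(h_i)|r_n+1$), derive (\ref{eq;one}) from (\ref{eq;exp}), and use the non-clustered hypothesis together with $t(h_i)\neq 0$, $t(h_i)\neq t(h_j)$ to bound the counting functions. The only cosmetic difference is that the paper verifies the sufficient conditions (\ref{eq;three}) and (\ref{eq;two}) on unit intervals, whereas you verify (\ref{eq;three0}) and (\ref{eq;two0}) directly; the arguments are equivalent.
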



\begin{proof}

 To  put
the  left-hand side of (\ref{eq;good}) into the framework of Theorem \ref{thm;final} we take 
    $Y=X$, $\xi_i=\mathrm{Id}, \nu=\mu$  and
$h_{i, n}=h_i^{r_n}$, where   $1\le i\le \ell$ and $ n\in \N$. 
	Let $t: \mathcal H \to \R\setminus \{0 \}$ be the injective map  given by the definition of $2\ell$-multiple 
	$n^{-\delta}$	decay of correlations, i.e., (\ref{eq;exp}) holds for $k=2\ell$. 
	Then      $t_i=t(h_i)\ (1\le i\le \ell)$ are  pairwise distinct and nonzero  real numbers.
	Recall that $\{r_n \}$ is a  non-clustered sequence of positive  integers. 
	
	For $1\le i ,j\le \ell$ and $m,n\in \N $, we take 
	\begin{equation}\label{eq;kong}
	\begin{aligned}
	b_{i,j}(m,n)&=	{|t_i r_m-t_j r_n|+1}, \\
	c_{i,j}(n)&=\left \{
	\begin{array}{ll}
|t_i r_n-t_jr_n|+1 & \mbox{if } i\neq j\\
 {|t_i r_n|+1 } & \mbox{if } i= j
	\end{array}
	\right.. 
	\end{aligned}
	\end{equation}
	It follows from (\ref{eq;exp}) with $k=2\ell$ that (\ref{eq;one}) holds
	with $\delta$ as in the statement and  some $M\ge 1$ independent of    $m$ or $n$. 
	We show that for any  $i,j\in \{1,2,\ldots, \ell \}$ one has  that (\ref{eq;three}) and  (\ref{eq;two})  hold, which   implies (\ref{eq;three0}) and (\ref{eq;two0}).
In view of Theorem \ref{thm;final}, this will complete the proof.

	 For (\ref{eq;two}),
		 it suffices to prove that the  cardinality of    $n\in \N $ satisfying 
	\begin{align}
	{t_j r_n-t_i r_m\in [s, s+1]} \Leftrightarrow 
	r_n\in t_j^{-1}[s+t_i r_m, s+1+t_i r_m]
	\end{align}
	is uniformly bounded for all $s\in \Z$ and $m\in \N$.
	Note that the interval $t_j^{-1}[s+t_i r_m, s+1+t_i r_m]$ has length $t_j^{-1}$ which is independent of $m$ or $s$. Since we assume the sequence $\{ r_n\}$ is non-clustered,  there exists $M\ge 1$ such that 
	\[
	|\{n\in \N: r_n \in I \}|\le 
	M
	\]
	for any closed  interval $I$ whose length is  no more than  $t_j^{-1}$. 
	Therefore (\ref{eq;two}) holds.	
For $i\neq j$, one can prove (\ref{eq;three}) similarly using
	 $t_i\neq t_j$.  The same argument using $t_i\neq 0$  proves (\ref{eq;three}) in the case where $i=j$. 
	
\end{proof}

\begin{proof}
	[Proof of Theorem \ref{thm;mixing}]
	Let 	$\mathcal F_0=\{f\in \mathcal F: \mu(f)=0 \}$, which is a subspace of $\mathcal F$
	since $\mathcal F$ contains constant functions. 
	We assume that $\mathcal F_0\neq \{0 \}$, otherwise the conclusion holds trivially. 	
	This assumption and (\ref{eq;letter}) imply $h^m\neq h^n$ for $m\neq n$.	
We take    
	$\mathcal H=\{ h^m: m\in \N \}$ and  $t(h^m)= m$. 
For any $k\in \N$, if   (\ref{eq;letter}) holds for $f_i\in \mathcal F$, then  (\ref{eq;exp}) holds for $f_i\in \mathcal F_0$. 
Since  we assume $\mathcal F$ contains constant functions, the $(2\ell-1)$-mixing  assumption implies   (\ref{eq;letter}) and hence (\ref{eq;exp})  for any $1\le k\le 2\ell-1$. 
  So
Theorem \ref{thm;exponential} can be applied in this setting with $\ell$ replaced by any element of  $\{1, \ldots, \ell \}$. 
A simple induction argument on $\ell$ shows that  equation 
(\ref{eq;repeat}) holds  for any $f_i \in \mathcal F$. 
The proof of the Remark \ref{rem;invertible} is the same with $\mathcal H=\{h^m: m\in \Z , m\neq 0 \}$.  

\end{proof}


\begin{proof}
	[Proof of Theorem \ref{thm;young}]
	Since $\mathcal L\cap L^\infty(X, \mu)$ contains constant functions, by induction on $k$, it suffices to show that for any $k\in \N $, any  $f_0, \ldots, f_k\in \mathcal L\cap L^\infty(X, \mu)$ with $\mu(f_0)=0$ and any pairwise distinct natural numbers $n_1, \ldots, n_k$, 
we have 
	\begin{align}
	\label{eq;dajia}
	\left |\int_X f_0(x)f_1(h^{n_1}x)\cdots f_k (h^{n_k }x)\dd \mu(x) \right |\ll (\min \{n_1, \ldots, n_k \})^{-\delta}, 
	\end{align}
	where the implied constant depends on $f_0, \ldots, f_k$. 
To see this, we assume without loss of generality that $n_1<n_2<\cdots <n_k$ and 
 apply (\ref{eq;young}) for $\psi=f_0$,  $\varphi=f_1(x)f_2(h^{n_2-n_1}x)\cdots f_k(h^{n_k-n_1}x)$ and $n=n_1$. 
	Then we have the left hand side of (\ref{eq;dajia}) is 
	\[
	\le C \|f_1 \|_{L^\infty }\cdots \|f_k \|_{L^\infty }\|f_0\|_{\mathcal L}\, n_1^{-\delta}, 
	\]
	which implies (\ref{eq;dajia}). 
	
\end{proof}

\begin{proof}
	[Proof of Example  \ref{ex;symbolic}]
	  
	The effective multiple mixing  was proved in \cite{ks} with a different language.	
	Let  $k\in \N$ and 
	 $\mathbf f=(f_0, f_1, \ldots, f_k)$  where $f_i\in C^\theta(X)$.  
\cite{ks} considers 
	  \[
	\mathrm{Cov}(\mathbf f)
	:=\left. \frac{\partial^{k+1}}{\partial \theta_0\cdots\partial \theta_k} \log \left(\int_X 
	  \exp \big(\sum_{i=0}^k \theta_i f_i \big)
	  \dd\mu \right)\right|_{\theta_0=\cdots=\theta_k=0}.
	  \]
	  It follows from \cite[Proposition 3.1]{ks} that there exists $C, \sigma>0$ such that for any
	  integers $n_1,\ldots, n_k $ 
	  \begin{align}\label{eq;tired}
	 |	\mathrm{Cov}( f_0, f_1\circ h^{n_1}, \ldots,  f_k\circ h^{n_k})|\le C e^{-\sigma (\max_{i}|n_i|) }. 
	  \end{align}
	  Here $C$ may depend on $f_0, \ldots, f_k$, but we may choose one which works for a fixed finite family of functions  
	  and bounded $k$.   The independence of $\sigma $ from  $f_i$  is not explicitly stated in \cite[Proposition 3.1]{ks}, but it can be deduced from the proof.

	  For a nonempty subset   $I=\{i_0<\cdots <i_s \}$  of $\{0, \ldots, k \}$, we take  
	  $$\mathbf f_I=(f_{i_0}\circ h^{n_{i_0}}, \ldots , f_{i_s}\circ h^{n_{i_s}}).$$ By a partition $P=\{I_1, \ldots, I_s \}$ of $\{0, \ldots, k \}$, we mean a decomposition of $\{0, \ldots, k \}$ into a disjoint union of  elements of $P$. We use $\mathcal P(k)$ to denote the set of partitions of $\{0, \ldots, k \}$. By \cite[Lemma 2.1]{ks}, 
	  \begin{align}\label{eq;formular}
	  \int_X f_0\circ h^{n_0} f_1\circ h^{n_1}\cdots f_k\circ h^{n_k} \dd \mu=\sum_{P\in \mathcal P(k)}\prod_{I\in P}\mathrm {Cov}
	  (\mathbf f_I).
	  \end{align}

If $I=\{i_0<i_1<\cdots< i_n \}$ where $n\ge 1$, then the $h$-invariance of  $\mu$ implies 
\[
\mathrm{Cov}(\mathbf f_I)=\mathrm{Cov}(  f_{i_0},
f_{i_1}\circ h^{n_{i_1} -n_{i_0}}, \cdots ,
 f_{i_s}\circ h^{n_{i_s}-n_{i_0}}).
\]
This together with (\ref{eq;tired}) implies 
\begin{align}\label{eq;hongkong3}
|\mathrm{Cov}(\mathbf f_I)|\le C\sum_{t=1}^s e^{-\sigma|n_{i_t} -n_{i_0}|}.
\end{align}
Note that the continuous  functions $f_i$ are bounded  on the compact space $X$. 
So if $P$ contains an $I$ with $|I|> 1$, in view of (\ref{eq;hongkong3}) 
	we have 
\begin{align}\label{eq;hongkong1}
\prod_{I\in P}|\mathrm{Cov}(\mathbf f_I)|\ll \sum_{t=1}^s e^{-\sigma|n_{i_t} -n_{i_0}|}.
\end{align}	
 For the $P$ consists of only singletons, by the $h$-invariance of $\mu$,  we have 
\begin{align}\label{eq;hongkong2}
\prod_{I\in P}\mathrm {Cov}
(\mathbf f_I)=\prod_{i=0}^k \int_X f_i\dd\mu. 
\end{align}
Therefore the system $(X, \mu , h)$ is $k$-mixing with exponential rate for $C^\theta (X)$. 

\end{proof}

\section{subgroup actions on homogeneous spaces}
The aim of this section is to prove Theorem \ref{thm;unipotent}. 
Let the notation and assumptions   be as in Theorem \ref{thm;unipotent}. 
Recall that $H$ is a semi-simple Lie group  with finite center. 
We fix a maximal compact subgroup $K$ of $H$ and denote by $\Ad $ to be the adjoint 
representation. 
We endow the Lie algebra  $\mathfrak h$ of $H$  with an $\Ad(K)$ invariant inner product. This inner product induces 
a right invariant Riemannian metric $\rho$ on $H$
 and a Hermitian inner product on $\mathfrak h_\mathbb C=\mathfrak h\otimes \mathbb C$. 
For any $h\in H$, let  $\|\Ad (h)\|$ be the  operator norm on
 $\mathfrak h$ and $\mathfrak h_{\mathbb C}$, that is, 
\[
\|\Ad(h) \|=\max \{\|\Ad(h) v\|: v\in \mathfrak h, \|v \|=1  \}
=\max \{\|\Ad(h) v\|: v\in \mathfrak h_{\mathbb C}, \|v \|=1  \}.
\]
By \cite[Lemma 2.1(\rmnum{3})]{beg}, there exists $0<\kappa\le 1\le \tau$ such that for any $h\in H$
\begin{align}\label{eq;op}
\| \Ad (h) \| ^{\kappa}\ll \exp \rho(h,1_H) \ll \|\Ad(h) \|^{\tau}. 
\end{align}

\begin{proof}[Proof of Theorem \ref{thm;unipotent}]
It suffices to show that (\ref{eq;unipotent}) holds for $\mu$-almost every $x$ under the additional assumption that 
that $\mu(f_i)=0$ and  $f_i=\til f_i+c_i$ where  $\til f_i \in C_c^\infty(X) $ and $c_i \in \R$.  
We will apply Theorem \ref{thm;final}
to 
\[
F_n(x)= \prod _{i=1}^\ell f_i(h_i^{r_n} x),
\]
that is, in the case where $X=Y$, $\xi_i=\mathrm{Id}$ and $h_{i, n}=h_i^{r_n}$. 
We take \begin{equation}
\begin{aligned}
b_{i, j}(m,n)&=\|\Ad(h_i^m h_j^{-n})\|,\\
c_{i, j}(n)&= \left\{ 
\begin{array}{cc}
 \|\Ad(h_i^ n h_j^{-n})\|& \mbox{if }i\neq j  \\
\infty & \mbox{if }i=j
\end{array} \right..
\end{aligned}
\end{equation}
The existence of $M,\delta>0$ so that  (\ref{eq;one}) holds  follows from \cite[Theorem 1.1]{beg} and (\ref{eq;op}).

Now we show that (\ref{eq;three0}) holds for any $c_{i, j}$ with  $i\neq j$. 
The group generated by  $h=\Ad(h_ih^{-1}_j)$ is unbounded by the assumption. If $h$ is not quasi-unipotent, then there is an eigenvalue $r\in \mathbb C$ with $|r|>1$ for the action of $\Ad(h)$ on $\mathfrak h_{\mathbb C}$.  So
$
c_{i, j}(n)\ge |r|^n\gg n
$, which implies (\ref{eq;three0}). 
If $h$ is quasi-unipotent and generates an unbounded subgroup, then in view of the Jordan canonical form of $h$, there exists a two dimensional $h$-invariant subspace $V$ of $\mathfrak h_{\mathbb{C}}$ such that with respect to some basis of $V$
\begin{align}\label{eq;matrix}
h|_V=\left(
\begin{array}{cc}
s & 1 \\
0 & s
\end{array}
\right)\quad \mbox{ and }\quad h^n|_V=\left(
\begin{array}{cc}
s^n & n s^{n-1} \\
0 & s^n
\end{array}
\right)
\end{align}
where $|s|=1$. 
Therefore,
\[
\| h^n\|\ge \| h^n|_V\|\gg n,
\]
where the implied constant depends on the choice of the basis.
Therefore (\ref{eq;three0}) holds.

Next, we show that either (\ref{eq;two0}) or (\ref{eq;two1}) holds for each $b_{i,j}$. 
Let $h =\Ad (h_i)$ and $g=\Ad(h_j^{-1})$. The groups $\langle h\rangle$
and $\langle g\rangle $ are unbounded according to the assumption. 
We consider two cases. 

First, suppose both $h$ and $g$ are quasi-unipotent. 
We have the Jordan   
 decomposition $g=g_sg_u=g_ug_s$ where $g_s\in \GL(\mathfrak h)$ is semi-simple and $g_u\in \GL(\mathfrak h)$ is unipotent,   see \cite[\S I.4.4]{borel}. Since 
 $g$ is quasi-unipotent and $\langle g\rangle  $ is unbounded, we have 
 $\langle g_s\rangle $ is relatively compact and $g_u$ is non-trivial. Similarly, we have  the
 Jordan decomposition $h=h_sh_u$. Moreover, since $g$ and $h$ commute with each other, all the elements appearing in the  Jordan decomposition  commute with each other. Therefore, 
 there exists $C\ge 1$ such that  
 \[
 C^{-1}\| h_u^m g_u^n \|\le
  \|h^m g^n \| \le C \| h_u^m g_u^n \|. 
 \]
 So we can assume without loss of generality that $g$ and $h$ are unipotent. 
 By Engel's theorem, there is a basis of $\mathfrak h$ such that $g$ and $h$ are upper
 triangular unipotent matrices. There exists a non-diagonal entry    $s$ of $g$ such that 
 $s\neq 0$ and all the other  non-diagonal  entries of $g$ left to $s$ are zero, i.e.\
 one row of $g$ has the form 
 \[
 \left(
 \begin{array}{ccccccccc}	
 0&\cdots & 0 & 1 & 0 &   \cdots & 0& s & \cdots \\
 \end{array}
 \right).
 \]
 The same row of  $h^m g^n$ is
 \[
\left(
\begin{array}{ccccccccc}	
0&\cdots & 0 & 1 & 0 &   \cdots & 0& sn+t_m & \cdots \\
\end{array}
\right),
\]
where $t_m$ is the entry of $h^m$ at the same position of $s$. 
It follows that $\| h^m g^n\|\gg |sn+t_m|$, which implies (\ref{eq;two0}) with $M$ comparable to $s^{-1}$ but most importantly independent of  $m$.

In the second case we assume  at least one  of $g$ and $h$ is not quasi-unipotent. Suppose there exists
a simultaneous eigenvector
  $v\in \mathfrak h_{\mathbb C}$   of $g$ and $h$, say 
\[
gv =sv\quad \mbox{ and }\quad  hv =tv, 
\]
such that  $\max \{|s|, |t| \}>1 $ and $\min\{ |s| , |t| \}\ge 1 $.  Then either 
$
\| h^m g^n \|\ge (\max \{|s|, |t| \})^m 
$ or $
\| h^m g^n \|\ge (\max \{|s|, |t| \})^n  
$, which implies  that either (\ref{eq;two0}) or (\ref{eq;two1}) holds. Note that we can always find such an eigenvector $v$ if precisely one of $g$ and $h$ is quasi-unipotent: $v$ is taken to be a simultaneous  eigenvector associated to an eigenvalue of modulus strictly greater than one.

Suppose that none of the simultaneous eigenvectors have the property considered in the previous paragraph, then both of $g$ and $h$ are not quasi-unipotent. 
  Moreover,  every simultaneous eigenvector expanded  by $g$ is contracted by $h$ (and vice versa).
Let $S_+$ (resp.\ $S_-$) denote the set of  eigenvalues of $g$ with modulus $>1$  (resp.\ $<1$) appearing with multiplicity.   Similarly $T_\pm$ denote the corresponding eigenvalues  for $h$.  By assumption we have a bijection between $S_\pm$ and $T_\mp$, and we denote either map by $s\mapsto t_s$. 

Fix $m\in \N$. Let $k\in \N$ be the maximal positive   integer such that  $|s^nt_s^m|<1$ for all $n<k$ and all $s\in S_+$.
Let $s_+\in S_+$ for which $|s_+^kt_{s_+}^m|>1$. Since $g,h\in\SL(\mathfrak{h})$, there exists an eigenvalue $s_-\in S_-$ such that $|s_-^{(k-1)}t_{s_-}^m|>1$.
Therefore, 
\[
\|h^mg^n\|\ge \frac{1}{2}( |s_+^nt_{s_+}^m| + |s_-^nt_{s_-}^m|)\ge \frac{1}{2}\min\{|s^+ |, |t_{s^-}| \}^{|n-k|-1}. 
\]
Note that the lower bound above is an  exponential function whose base  is bigger than one and  independent of $m$, 
so (\ref{eq;two0}) holds.

If all the  $h_i$ and $h_i h_j^{-1} \ (i\neq j)$ are not quasi-unipotent, then $c_{i,j}(n)\ (i\neq j)$ and $b_{i,j}(m,n)$ are bounded from below by exponential functions whose base and coefficients are independent of  $m$. In this  case the bound  of (\ref{eq;one}) is the same as that in exponential decay
of correlations, so we have a better error term (\ref{eq;fuwocheng}). 

\end{proof}

\end{document}